%
%
%
%
\documentclass[12pt]{amsart}
\usepackage{graphicx}
\usepackage{amssymb}
\usepackage{tikz}
\usetikzlibrary{matrix,arrows.meta,bending}
\usepackage{color}
\usepackage[numbers]{natbib}


\usepackage[a4paper, left=3cm, right=3cm, top=3cm, bottom=3cm, footskip=15mm]{geometry}

\newtheorem{theorem}{Theorem}[section]

\newtheorem{lemma}[theorem]{Lemma}
\newtheorem{proposition}[theorem]{Proposition}
\newtheorem{corollary}[theorem]{Corollary}
\newtheorem{conjecture}[theorem]{Conjecture}
\theoremstyle{definition}
\newtheorem{definition}[theorem]{Definition}

\theoremstyle{remark}
\newtheorem{remark}[theorem]{Remark}

\numberwithin{equation}{section}



\makeindex

\usepackage{fancyhdr}
\pagestyle{fancy}
\fancyhf{}                      
\cfoot{\thepage}                

\usepackage{calc} 

\AtBeginDocument{%
  \setlength{\textwidth}{\dimexpr\paperwidth - 6cm\relax}%
  \setlength{\oddsidemargin}{\dimexpr 3cm - 1in\relax}%
  \setlength{\evensidemargin}{\dimexpr 3cm - 1in\relax}%
  \setlength{\marginparwidth}{2.5cm}%
}

\setlength{\textwidth}{15.00cm}            
\setlength{\oddsidemargin}{0.46cm}         
\setlength{\evensidemargin}{0.46cm}

\begin{document}

\title{On the Erd\H{o}s distance problem}

\author{T. Agama}
\address{Department of Mathematics, African Institute for Mathematical science, Ghana
}
\email{theophilus@aims.edu.gh/emperordagama@yahoo.com}

\subjclass[2010]{Primary 52C10; Secondary 51K05; 52C35}

\date{\today}

\dedicatory{In loving memory of Doctor Margaret Lesley  McIntyre}

\keywords{compression gap; mass of compression; compression estimates}

\begin{abstract}
In this paper, using the compression method, we recover the lower bound for the Erd\H{o}s unit distance problem and provide an alternative proof to the distinct distance conjecture. In particular, in $\mathbb{R}^k$ for all $k\geq 2$, we have 
\begin{align}
\#\bigg\{(\vec{x}_t,\vec{x_j})\in \mathbb{E}\subset\mathbb{R}^k:~||\vec{x_j}-\vec{x_t}||=1,~1\leq t,j\leq n\bigg\}\geq C\frac{\sqrt{k}}{2}n^{1+o(1)}\nonumber
\end{align}
for some $C>0$. We also show that
\begin{align}
\# \bigg\{d_j:d_j=||\vec{x_s}-\vec{y_t}||,~d_j\neq d_i,~1\leq s,t\leq n\bigg\}\geq D\frac{\sqrt{k}}{2}n^{\frac{2}{k}-o(1)}\nonumber 
\end{align}
for some $D>0$. These lower bounds generalize the lower bounds of the Erd\H{o}s unit distance and the distinct distance problem to higher dimensions.
\end{abstract}

\maketitle

\section{Introduction}

Paul Erd\H{o}s posed the distinct distance problem as a central question in combinatorial geometry: how many pairwise distinct distances can $n$ points in the plane determine? Over seven decades this question has driven the development of powerful combinatorial, algebraic, and geometric tools. Early progress by Moser produced the first nontrivial polynomial lower bounds \cite{moser1952different}; subsequent improvements appeared in work of Chung and collaborators \cite{chung1992number}, and by Solymosi and others \cite{solymosi2001distinct}. The breakthrough of Guth and Katz \cite{guth2015erdHos} established the near-optimal lower bound in the planar case by a novel combination of algebraic and incidence-geometric techniques.\\

The purpose of the present paper is to introduce a different-and elementary in spirit-framework, the \emph{compression method}, and to use it to recover lower bounds of Erd\H{o}s-type for both the unit-distance and distinct-distance problems and to extend these bounds naturally to higher-dimensional Euclidean spaces. At the heart of our approach is a deterministic geometric transformation we call a \emph{compression} (see Definition \ref{compression} in Section \ref{sec:compression}): for a scale parameter $0<m\leq 1$ the compression $\mathbb{V}_m$ maps a point $\vec{x}=(x_1,\dots,x_k)\in \mathbb{R}^k$ to the point whose coordinates are the reciprocals scaled by $m$. Two simple but powerful statistics associated to this map-the \emph{mass of compression} and the \emph{compression gap}-encode how much a given point is moved by compression and, crucially, how differences between points translate into those movements. The basic identity in Proposition \ref{cgidentity} connects the compression gap to weighted coordinate sums and thereby converts combinatorial counting problems about unit and distinct distances into tractable sums over these compression statistics.\\

Our main theorems (stated above and proved in Section \ref{sec:applications}) assert, roughly, that for any fixed dimension $k\geq 2$ one can construct configurations of $n$ points in $\mathbb{R}^k$ for which
$$
\#\{\text{unit distances}\}\gg \frac{\sqrt{k}}{2}\,n^{1+o(1)}
\quad \text{and} \quad
\#\{\text{distinct distances}\}\gg \frac{\sqrt{k}}{2}\,n^{\tfrac{2}{k}-o(1)},
$$
where the implied constants are absolute and the error terms $o(1)$ tend to $0$ as $n\to \infty$. These bounds recover the growth rates known in the literature while making explicit the dependence on the ambient dimension $k$; they are obtained by counting distances that arise from specially chosen points together with their compressed images and by combining the mass/gap estimates (Proposition \ref{crucial} and Lemma \ref{gap estimates}) with straightforward combinatorial summation. While the estimates we produce align in spirit with the bounds obtained by incidence-theoretic methods (notably those culminating in \cite{guth2015erdHos}), our route is different: it is based on a geometric transformation and elementary analytic estimates rather than on algebraic decomposition or polynomial partitioning. This alternative viewpoint isolates a flexible combinatorial mechanism (the compression gap) that may be of independent interest.
\bigskip

\subsection*{Key ideas}
The proofs rest on four interlocking ideas.\\

\begin{enumerate}

  \item \textbf{Compression as a counting device.} By pairing many original points with their compressed images we generate a large collection of pairs at prescribed separation (often unit separation by an appropriate choice of scale). Counting these pairs gives immediate lower bounds for the unit-distance function. The bijectivity and involutive nature of $\mathbb{V}_m$ allow us to control overlaps and avoid overcounting.
  \bigskip
  
  \item \textbf{Mass and gap estimates.} The mass $\mathcal{M}(\mathbb{V}_m[\vec{x}])$ captures a weighted reciprocal sum of coordinates and is estimated above and below in Proposition \ref{crucial}. The compression gap $\mathcal{G}\circ\mathbb{V}_m[\vec x]$ measures the Euclidean displacement produced by compression; Proposition \ref{cgidentity} expresses its square in terms of masses of the coordinate squares and their reciprocals. These relations permit effective lower bounds for individual gaps and for sums of gaps across large subsets of well-chosen points (Lemmas \ref{gap estimates} and Corollary \ref{compression gap estimate 2}).
  \bigskip
  
  \item \textbf{Choice of configurations.} To turn the analytic estimates into combinatorial lower bounds, we select point sets with prescribed coordinate ranges (for instance, many points concentrated near the origin in a controlled sense or with prescribed supremum norms). These choices balance the two competing forces in the gap identity (distance from the origin versus reciprocal contributions) to guarantee that many pairs realize the distances we wish to count.
  \bigskip
  
  \item \textbf{Summation and scaling.} After establishing per-point gap lower bounds, we sum over suitably large collections of indices. Careful bookkeeping of the parameter $m=m(k)$ (chosen to be small, e.g. $m=O(1/\log k)$) and the coordinate scales produces the stated dimension-dependent exponents and the explicit $\sqrt{k}/2$ factor that appears in the main theorems.
  
\end{enumerate}
\bigskip

\subsection*{Relation to previous work}
The compression method complements the many successful lines of attack on Erd\H{o}s problems: combinatorial crossing-number arguments, incidence geometry and algebraic partitioning, and additive combinatorics. Our approach is elementary and constructive, and it recovers known polynomial growth rates while making the role of ambient dimension transparent. We therefore view this work both as an alternative proof strategy and as a framework that may be adapted to related extremal questions in discrete geometry.
\bigskip

\subsection*{Organization of the paper}
After this introduction, we begin in Section \ref{sec:compression} by formally defining compressions and proving their basic properties. Section \ref{sec:mass-gap} develops the mass and compression-gap machinery: definitions, the central identity (Proposition \ref{cgidentity}), and the principal upper and lower estimates (Proposition \ref{crucial}, Lemma \ref{gap estimates} and Corollary \ref{compression gap estimate 2}). In Section \ref{sec:applications}, we apply these estimates to the Erd\H{o}s unit-distance and distinct-distance problems and give proofs of Theorems \ref{weakerdos} and \ref{erdosproblem}; the section contains the explicit constructions and the combinatorial summations that produce the quantitative lower bounds. We conclude in Section \ref{sec:conclusion} with a brief discussion of variants, possible sharpenings, and directions for future work (including remarks on how the compression viewpoint might interact with incidence and algebraic methods).\\

Conventions and notation used throughout the paper are recorded in \S1.1. The compression method and its estimates are elementary to state but, as the proofs show, they provide a robust way to convert coordinate information into distance counts; we hope this viewpoint will be useful beyond the specific Erd\H{o}s problems considered here.\\

The Erd\H{o}s distinct distance conjecture is the assertion that  \begin{conjecture}
The number of distinct distances that can be formed from $n$ points in the plane should at least be  $n^{1-o(1)}$.
\end{conjecture}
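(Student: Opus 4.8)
The plan is to reduce the planar distinct-distance problem to an incidence count for lines in $\mathbb{R}^3$ and then to control that count by a polynomial partitioning argument. Let $P$ be a set of $n$ points in the plane spanning exactly $D$ distinct distances, and let $Q$ denote the number of ordered quadruples $(p_1,p_2,p_3,p_4)\in P^4$ with $|p_1p_2|=|p_3p_4|>0$. First I would estimate $Q$ from below: writing $m_\delta$ for the number of ordered pairs at a given distance $\delta$, we have $\sum_\delta m_\delta = n(n-1)$ over the $D$ admissible distances, so Cauchy--Schwarz gives $Q=\sum_\delta m_\delta^2 \gg n^4/D$. Thus any upper bound of the shape $Q\ll n^3\log n$ forces $D\gg n/\log n = n^{1-o(1)}$, which is the conjectured bound, and the entire problem is shifted onto bounding $Q$ from above.

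To bound $Q$, I would use the Elekes--Sharir dictionary: the equality $|p_1p_2|=|p_3p_4|$ holds precisely when some orientation-preserving rigid motion of the plane sends $p_1\mapsto p_3$ and $p_2\mapsto p_4$. Parametrizing these rigid motions as a three-dimensional family, each ordered pair $(p,q)$ of points of $P$ determines a curve --- after a standard change of variables, a line --- in $\mathbb{R}^3$, and $Q$ becomes essentially the number of pairwise incidences among these $\sim n^2$ lines. The second step is therefore the incidence estimate of Guth and Katz: a family of $N$ lines in $\mathbb{R}^3$ with at most $\sqrt{N}$ lines in any common plane and at most $\sqrt{N}$ in any common regulus determines $O(N^{3/2})$ intersection points of bounded multiplicity. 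Feeding $N\sim n^2$ into this bound and returning through the dictionary yields $Q\ll n^3\log n$, as required.

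The hard part will be the incidence estimate itself, not the bookkeeping around it. Establishing it means constructing a partitioning polynomial whose zero set $Z$ cuts $\mathbb{R}^3$ into cells each meeting few lines, counting incidences inside the cells by a direct argument, and --- this is the genuine obstacle --- controlling the lines and incidences that collapse onto $Z$. The latter forces one to understand how lines can concentrate on low-degree ruled surfaces, which is where the algebraic input (flecnodal polynomials and the geometry of rulings) becomes unavoidable; the Cauchy--Schwarz step and the Elekes--Sharir change of variables are comparatively routine by contrast. I would expect any alternative engine, such as the compression mechanism announced in the abstract, to earn its keep precisely at this step, since it is the concentration of lines on ruled surfaces, rather than the reduction to lines, that carries the full difficulty of the conjecture.
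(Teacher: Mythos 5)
Your plan is correct as a roadmap, but it takes a genuinely different route from the paper's: what you describe is the Elekes--Sharir--Guth--Katz argument (which the paper cites as prior work), while the paper proceeds through its own ``compression'' formalism. You convert the distinct-distance count $D$ into an upper bound on the quadruple count $Q=\sum_\delta m_\delta^2$ by Cauchy--Schwarz, translate $Q$ into incidences among roughly $n^2$ lines in $\mathbb{R}^3$ via the rigid-motion correspondence, and then need the $O(N^{3/2})$ incidence bound proved by polynomial partitioning and the analysis of ruled surfaces; this yields the sharp $D\gg n/\log n=n^{1-o(1)}$ in the plane for an \emph{arbitrary} $n$-point set, at the cost of the heavy algebraic machinery you correctly identify as the crux (and which your write-up black-boxes rather than proves). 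The paper instead restricts to points with positive integer coordinates, pairs each such point $\vec{x_j}$ with its coordinatewise reciprocal $\mathbb{V}_1[\vec{x_j}]$, identifies the distance $\|\vec{x_j}-\mathbb{V}_1[\vec{x_j}]\|$ with the ``gap of compression,'' and sums its lower estimate for this gap over $j\leq n/2$ to obtain $\gg_k \frac{\sqrt{k}}{2}n^{\frac{2}{k}-o(1)}$. The trade-off is stark: the compression route is short and is stated in every dimension $k$, but it only counts distances arising from the $n/2$ special pairs $(\vec{x_j},\mathbb{V}_1[\vec{x_j}])$ of a specially arranged configuration, and it never performs the reduction --- which in your plan is precisely the Cauchy--Schwarz plus Elekes--Sharir step --- that turns a count over special pairs into a lower bound valid for \emph{every} set of $n$ points. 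So if you carry out your plan by importing or reproving the Guth--Katz incidence theorem you will have a complete proof of the conjecture as stated; the paper's compression mechanism does not supply a substitute for that step, and its concentration of the entire difficulty into the line-on-ruled-surface analysis is, as you suspect, exactly where any alternative engine would have to do its work.
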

Progress on this conjecture has developed over time. Let us denote $g(n)$ as the counting function for this construction. The first lower bound of the form 
\begin{align}
g(n)\gg n^{\frac{2}{3}}\nonumber
\end{align}
appears in \cite{moser1952different}, which improves an earlier version of Erd\H{o}s. This was eventually improved to 
\begin{align}
g(n)\gg \frac{n^{\frac{4}{5}}}{\log n}\nonumber
\end{align}
in \cite{chung1992number} and 
\begin{align}
g(n)\gg n^{\frac{6}{7}}\nonumber
\end{align}
in \cite{solymosi2001distinct}. The best currently known lower bound can be found in \cite{guth2015erdHos}, which essentially solves the problem. In this paper, using the compression method and its accompanying estimates, we provide an alternative solution to the conjecture in the following result: 

\begin{theorem}
\begin{align}
\#\bigg \{d_j:d_j=||\vec{x_s}-\vec{y_t}||,~d_j\neq d_i,~1\leq s,t\leq n,~\vec{x},\vec{y}\in \mathbb{R}^k\bigg\}\geq  D\frac{\sqrt{k}}{2}n^{\frac{2}{k}-o(1)}\nonumber 
\end{align}
for some $D>0$.
\end{theorem}
\bigskip

Using this method, we provide a lower bound for the Erd\H{o}s unit distance problem, which takes into consideration the dimension of the space in which the points reside in the form:

\begin{theorem}
Let 
$$
\mathcal{I}_n:=\bigg\{(\vec{x}_t,\vec{x_j})\in \mathbb{E}\subset\mathbb{R}^k~:~||\vec{x_j}-\vec{x_t}||=1,~1\leq t,j\leq n\bigg\}
$$ 
We have 
\begin{align} 
\#\mathcal{I}_n\geq C\frac{\sqrt{k}}{2}n^{1+o(1)}\nonumber
\end{align}
for some $C>0$.
\end{theorem}

\subsection{Notations and conventions}

Throughout this paper, we assume that $n$ is sufficiently large for any number of $n$ points in the euclidean plane. We write $f(s)\gg g(s)$ if there exists a constant $c>0$ such that $f(s)\geq c|g(s)|$ for all $s$ sufficiently large. If the constant depends on some variable, say $t$, then we denote the inequality by $f(s)\gg_t g(s)$. We write $f(s)=o(g(s))$ if the limits $\lim \limits_{s\longrightarrow \infty}\frac{f(s)}{g(s)}=0$ hold. In particular, $f(s)=o(1)$ implies that $f(s)\longrightarrow 0$ as $s\longrightarrow \infty.$

\section{Compression}\label{sec:compression}

In this section, we introduce the notion of compression of points in space. We study the mass of compression and its accompanying estimates. These estimates turn out to be useful for estimating the compression gap, a compression statistic which we will study in the sequel.

\begin{definition}\label{compression}
By the compression of scale $0<m\leq 1$ on $\mathbb{R}^{n}$, we mean the map $\mathbb{V}:(\mathbb{R}\setminus 0)^n\longrightarrow \mathbb{R}^n$ such that 
\begin{align}
\mathbb{V}_m[(x_1,x_2,\ldots, x_n)]=\bigg(\frac{m}{x_1},\frac{m}{x_2},\ldots,\frac{m}{x_n}\bigg)\nonumber
\end{align}
for $n\geq 2$ and with $x_i\neq 0$ for all $i=1,\ldots,n$. 
\end{definition}

\begin{remark}
The notion of compression is, in some way, the process of rescaling points in $\mathbb{R}^n$ for $n\geq 2$. Thus, it is important to notice that a compression roughly speaking pushes points very close to the origin away from the origin by a certain scale and similarly draws points away from the origin close to the origin. Intuitively, compression induces some kind of motion on points in Euclidean space.
\end{remark}

\begin{proposition}
A compression of scale $0<m\leq 1$ with $\mathbb{V}_m:(\mathbb{R}\setminus 0)^n\longrightarrow \mathbb{R}^n$ is a bijective map. In particular, the compression $\mathbb{V}_m:(\mathbb{R}\setminus 0)^n\longrightarrow \mathbb{R}^n$ is a bijective map of order $2$.
\end{proposition}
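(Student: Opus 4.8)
The plan is to show directly that $\mathbb{V}_m$ is an involution, since a self-inverse map is automatically a bijection, and an involution that is not the identity has order exactly $2$ in the group of self-maps of the space. Before doing so, I would first address the one genuine subtlety, namely the domain: because the defining formula divides by each coordinate, $\mathbb{V}_m$ is only well defined on the punctured space $(\mathbb{R}\setminus\{0\})^n$ rather than on all of $\mathbb{R}^n$, and I would read the statement as asserting bijectivity of $\mathbb{V}_m$ on this set. The key preliminary observation is that $\mathbb{V}_m$ maps this set into itself: if every $x_i\neq 0$ then every $m/x_i\neq 0$ (here $m\geq 1$, so in particular $m\neq 0$), so the image again has all nonzero coordinates and a second application is legitimate.

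The central computation is then a one-line check that applying $\mathbb{V}_m$ twice returns the input. Concretely,
\begin{align}
\mathbb{V}_m\big[\mathbb{V}_m[(x_1,\ldots,x_n)]\big]=\mathbb{V}_m\bigg[\bigg(\frac{m}{x_1},\ldots,\frac{m}{x_n}\bigg)\bigg]=\bigg(\frac{m}{m/x_1},\ldots,\frac{m}{m/x_n}\bigg)=(x_1,\ldots,x_n),\nonumber
\end{align}
using $m/(m/x_i)=x_i$ for each $i$. This identity holds for every value of the scale $m$, which is worth emphasizing: the twofold composition collapses to the identity independently of $m$. Hence $\mathbb{V}_m\circ\mathbb{V}_m=\mathrm{id}$ on $(\mathbb{R}\setminus\{0\})^n$.

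From $\mathbb{V}_m\circ\mathbb{V}_m=\mathrm{id}$ I would deduce bijectivity immediately: $\mathbb{V}_m$ possesses a two-sided inverse, namely itself, so it is simultaneously injective and surjective. Finally, to record that the order is exactly $2$ rather than $1$, I would note that $\mathbb{V}_m$ is not the identity map — for instance any point having some coordinate $x_i$ with $x_i^2\neq m$ is genuinely moved — so that $\mathbb{V}_m\neq\mathrm{id}$ while $\mathbb{V}_m^2=\mathrm{id}$, which pins the order down to precisely $2$. I expect no serious obstacle in this argument; the only point demanding care is the domain restriction, which must be handled so that the twofold composition is actually defined and the involution identity is meaningful.
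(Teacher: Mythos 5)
Your proof is correct, and it takes a cleaner route than the paper's. The paper argues injectivity directly (from $m/x_i=m/y_i$ it deduces $x_i=y_i$), merely asserts that ``surjectivity follows by definition of the map,'' and only afterwards observes $\mathbb{V}_m^2[\vec{x}]=\vec{x}$ to get the order-$2$ claim. You instead establish the involution identity $\mathbb{V}_m\circ\mathbb{V}_m=\mathrm{id}$ first and extract bijectivity from it, which buys you three things the paper glosses over: surjectivity comes for free (the preimage of $\vec{y}$ is exhibited explicitly as $\mathbb{V}_m[\vec{y}]$ rather than asserted), the domain issue is handled honestly (the map is only defined on $(\mathbb{R}\setminus\{0\})^n$, and you check this set is preserved so that the composition makes sense), and the order is pinned to exactly $2$ by noting $\mathbb{V}_m\neq\mathrm{id}$, a step the paper omits. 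The paper's separate injectivity computation is not wrong, just redundant once the involution is in hand; your single argument subsumes it.
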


\begin{proof}
Suppose that $\mathbb{V}_m[(x_1,x_2,\ldots, x_n)]=\mathbb{V}_m[(y_1,y_2,\ldots,y_n)]$. It follows that
\begin{align}
\bigg(\frac{m}{x_1},\frac{m}{x_2},\ldots,\frac{m}{x_n}\bigg)=\bigg(\frac{m}{y_1},\frac{m}{y_2},\ldots,\frac{m}{y_n}\bigg).\nonumber
\end{align}
It follows that $x_i=y_i$ for each $i=1,2,\ldots, n$. Surjectivity follows by the definition of the map. Thus, the map is bijective. The latter claim follows by noting that $\mathbb{V}^2_m[\vec{x}]=\vec{x}$.
\end{proof}

\subsection{The mass of compression estimates}

In this section, we study the mass of a compression with a given scale. We will use the upper and lower estimates of the mass of  compression to establish corresponding estimates for the gap of compression. These estimates will form an essential tool to establish the main result of this paper.

\begin{definition}\label{mass}
By the mass of a compression of scale $0<m\leq 1$, we mean the map $\mathcal{M}:\mathbb{R}^n\longrightarrow \mathbb{R}$ such that \begin{align}
\mathcal{M}(\mathbb{V}_m[(x_1,x_2,\ldots,x_n)])=\sum \limits_{i=1}^{n}\frac{m}{x_i}.\nonumber
\end{align}
\end{definition}

\begin{lemma}\label{elementary}
We have 
\begin{align}
\sum \limits_{n\leq x}\frac{1}{n}=\log x+\gamma+O\bigg(\frac{1}{x}\bigg)\nonumber
\end{align}
where $\gamma=0.5772\cdots $.
\end{lemma}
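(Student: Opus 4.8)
The plan is to prove this classical estimate by \emph{partial summation} (Abel's summation formula), comparing the discrete sum $\sum_{n \le x} 1/n$ against the integral $\int_1^x dt/t = \log x$ and controlling the discrepancy through the fractional part of the variable of integration.

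First I would apply Abel's summation formula with coefficients $a_n = 1$ and the smooth weight $f(t) = 1/t$, which yields
\begin{align}
\sum_{n \le x} \frac{1}{n} = \frac{\lfloor x \rfloor}{x} + \int_1^x \frac{\lfloor t \rfloor}{t^2}\, dt. \nonumber
\end{align}
Next I would substitute $\lfloor t \rfloor = t - \{t\}$, where $\{t\}$ denotes the fractional part, to split the integral into its main term and an error term:
\begin{align}
\int_1^x \frac{\lfloor t \rfloor}{t^2}\, dt = \int_1^x \frac{dt}{t} - \int_1^x \frac{\{t\}}{t^2}\, dt = \log x - \int_1^x \frac{\{t\}}{t^2}\, dt. \nonumber
\end{align}

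The decisive step is to show that the improper integral $\int_1^\infty \{t\}/t^2\, dt$ converges and to extract the constant it defines. Since $0 \le \{t\} < 1$, the tail satisfies $\int_x^\infty \{t\}/t^2\, dt \le \int_x^\infty t^{-2}\, dt = 1/x$, so the integral converges and
\begin{align}
\int_1^x \frac{\{t\}}{t^2}\, dt = C + O\!\left(\frac{1}{x}\right), \qquad C := \int_1^\infty \frac{\{t\}}{t^2}\, dt. \nonumber
\end{align}
Combining this with the elementary estimate $\lfloor x \rfloor / x = 1 - \{x\}/x = 1 + O(1/x)$ gives
\begin{align}
\sum_{n \le x} \frac{1}{n} = \log x + (1 - C) + O\!\left(\frac{1}{x}\right), \nonumber
\end{align}
and I would then identify $\gamma = 1 - C$ as the Euler--Mascheroni constant, whose decimal expansion begins $0.5772\cdots$.

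The argument harbors no genuine obstacle — it is a standard exercise in analytic number theory — but the step requiring the most care is justifying the convergence of $\int_1^\infty \{t\}/t^2\, dt$ and verifying that the discarded tail is genuinely $O(1/x)$ rather than merely $o(1)$, since the sharp $O(1/x)$ error term in the statement rests precisely on this bound together with the matching estimate for $\{x\}/x$.
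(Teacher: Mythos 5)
Your proof is correct: the partial-summation identity, the splitting of $\lfloor t\rfloor$ into $t-\{t\}$, the convergence of $\int_1^\infty \{t\}/t^2\,dt$ with tail bound $O(1/x)$, and the identification $\gamma = 1 - \int_1^\infty \{t\}/t^2\,dt$ are all standard and accurately carried out, and they do deliver the sharp $O(1/x)$ error term claimed in the statement. Note, however, that the paper itself offers no proof of this lemma at all --- it is stated as a classical fact (the Euler--Mascheroni asymptotic for the harmonic sum) and used as a black box in Proposition \ref{crucial} --- so your argument supplies a justification that the paper omits rather than paralleling or diverging from an existing one.
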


\begin{remark}
Next, we prove upper and lower bounds for the mass of the compression of scale $0<m\leq 1$.
\end{remark}

\begin{proposition}[The mass of compression estimates]\label{crucial}
Let $(x_1,x_2,\ldots,x_n)\in \mathbb{R}^n$ with $x_i\neq x_j$ for $1\leq i,j\leq n$ with $i\neq j$ with $x_i\neq 0$ for all $1\leq i\leq n$. We have
\begin{align}
m\log \bigg(1-\frac{n-1}{\mathrm{sup}(x_j)}\bigg)^{-1} \ll \mathcal{M}(\mathbb{V}_m[(x_1,x_2,\ldots, x_n)])\ll m\log \bigg(1+\frac{n-1}{\mathrm{inf}(x_j)}\bigg)\nonumber
\end{align}
for $n\geq 2$.
\end{proposition}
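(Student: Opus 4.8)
The plan is to observe first that, by Definition \ref{mass}, we have $\mathcal{M}(\mathbb{V}_m[(x_1,\ldots,x_n)]) = m\sum_{i=1}^{n}\frac{1}{x_i}$, so the whole statement reduces to pinning the reciprocal sum $S := \sum_{i=1}^{n}\frac{1}{x_i}$ between two explicit logarithms. Write $a = \mathrm{Inf}(x_j) = \min_i x_i$ and $b = \mathrm{sup}(x_j) = \max_i x_i$, both positive integers since the $x_i$ lie in $\mathbb{N}$. Before anything else I would record the harmless but necessary fact that, because the $x_i$ are $n$ \emph{distinct} naturals all bounded by $b$, one has $b \geq n$, hence $\frac{n-1}{b} < 1$ and the quantity $\log(1-\frac{n-1}{b})^{-1}$ on the left is genuinely positive and finite.

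For the upper bound I would argue by an extremal (rearrangement) principle: among all choices of $n$ distinct naturals with minimum $a$, the reciprocal sum $S$ is largest when the values are packed as tightly as possible at the bottom, namely $a, a+1, \ldots, a+n-1$, so that $S \leq \sum_{k=0}^{n-1}\frac{1}{a+k}$. I would then estimate this truncated harmonic sum either directly by the comparison $\sum_{k=0}^{n-1}\frac{1}{a+k} \leq \frac{1}{a} + \int_{a}^{a+n-1}\frac{dt}{t}$, or by applying Lemma \ref{elementary} to the difference $\sum_{j \leq a+n-1}\frac{1}{j} - \sum_{j\leq a-1}\frac{1}{j}$; both routes give $\sum_{k=0}^{n-1}\frac{1}{a+k} = \log\frac{a+n-1}{a} + O(\frac{1}{a}) \ll \log(1 + \frac{n-1}{a})$, which is the claimed upper estimate after multiplying through by $m$.

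For the lower bound I would run the same extremal principle in the opposite direction: among $n$ distinct naturals with maximum $b$, the sum $S$ is smallest when the values are pushed as high as possible, namely $b-n+1, \ldots, b-1, b$, giving $S \geq \sum_{j=b-n+1}^{b}\frac{1}{j}$. The lower integral comparison $\frac{1}{j} \geq \int_{j}^{j+1}\frac{dt}{t}$ then yields $\sum_{j=b-n+1}^{b}\frac{1}{j} \geq \int_{b-n+1}^{b+1}\frac{dt}{t} = \log\frac{b+1}{b-n+1} \geq \log\frac{b}{b-n+1} = \log(1-\frac{n-1}{b})^{-1}$, and multiplying by $m$ finishes the lower estimate.

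The conceptual heart of the argument is the two extremal claims, which I would justify by a one-step exchange argument: replacing any non-consecutive pair of chosen values by a tighter (resp. looser) admissible pair moves $S$ monotonically in the desired direction. The main technical nuisance — and the place where I expect to spend the most care — is reconciling the exact logarithmic forms $\log(1+\frac{n-1}{a})$ and $\log(1-\frac{n-1}{b})^{-1}$ in the statement with what the integral or Lemma \ref{elementary} estimates actually produce (the $a-1$ versus $a$ shift, the $O(1/a)$ error term, and the degenerate case $a=1$). Fortunately all of these discrepancies are of lower order and are absorbed by the implied constants in the $\ll$ and $\gg$ relations, so the proof goes through once one is content to work up to those constants.
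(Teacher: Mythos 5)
Your proposal is correct and follows essentially the same route as the paper: reduce to the reciprocal sum, bound it above by $\sum_{k=0}^{n-1}\frac{1}{\mathrm{Inf}(x_j)+k}$ and below by $\sum_{k=0}^{n-1}\frac{1}{\mathrm{sup}(x_j)-k}$ via the distinctness of the $x_i$, then estimate these truncated harmonic sums to get the stated logarithms. You supply details the paper omits (justification of the extremal packing step, the integral comparison, and the observation that $\mathrm{sup}(x_j)\geq n$ makes the left-hand logarithm finite), but the underlying argument is the same.
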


\begin{proof}
Let $(x_1,x_2,\ldots,x_n)\in \mathbb{R}^n$ for $n\geq 2$ and $x_i\neq x_j$~($i\neq j$) with $x_i\neq 0$ for all $1\leq i\leq n$. It follows that 
\begin{align}
\mathcal{M}(\mathbb{V}_m[(x_1,x_2,\ldots, x_n)])&=m\sum \limits_{j=1}^{n}\frac{1}{x_j}\nonumber \\&\leq m\sum \limits_{k=0}^{n-1}\frac{1}{\mathrm{inf}(x_j)+k}\nonumber
\end{align}
and the upper estimate follows by the estimate for this sum using Lemma \ref{elementary}. The lower estimate also follows by noting the lower bound 
\begin{align}
\mathcal{M}(\mathbb{V}_m[(x_1,x_2,\ldots, x_n)])&=m\sum \limits_{j=1}^{n}\frac{1}{x_j}\nonumber \\&\geq m\sum \limits_{k=0}^{n-1}\frac{1}{\mathrm{sup}(x_j)-k}.\nonumber
\end{align}
\end{proof}

It is important to note that the condition $x_i\neq x_j$ for $(x_1,x_2,\ldots,x_n)\in \mathbb{R}^n$ is not only a quantifier but a requirement; otherwise, the statement for the mass of compression will be completely flawed. To demonstrate, suppose that we take $x_1=x_2=\cdots=x_n$, then we get $\mathrm{inf}(x_j)=\mathrm{sup}(x_j)$. This implies that the mass of compression of scale $m$ satisfies 
\begin{align}
m\sum \limits_{k=0}^{n-1}\frac{1}{\mathrm{inf}(x_j)-k}\leq \mathcal{M}(\mathbb{V}_m[(x_1,x_2,\ldots,x_n)])\leq m\sum \limits_{k=0}^{n-1}\frac{1}{\mathrm{inf}(x_j)+k}\nonumber
\end{align}
This inequality cannot hold. Thus, we enforce the requirement that the choice of tuple $(x_1,x_2,\ldots,x_n)\in \mathbb{R}^n$ must satisfy $x_i\neq x_j$ for all $1\leq i,j\leq n$. Hence, in this paper, this condition will be highly extolled. In situations where it is not mentioned, it will be assumed that the tuple $(x_1,x_2,\ldots,x_n)\in \mathbb{R}^n$ is such that $x_i\neq x_j$ for $1\leq i,j\leq n$.

\subsection{Compression gap estimates}\label{sec:mass-gap}

In this section, we recall the notion of the \emph{compression gap} and its various estimates. We prove upper and lower bounds for the gap of a point under compression of any scale.

\begin{definition}\label{gap}
Let $(x_1,x_2,\ldots, x_n)\in \mathbb{R}^n$ with $x_i\neq 0$ for all $i=1,2\ldots,n$. By the \emph{compression gap} of scale $m>0$ for compression $\mathbb{V}_m$, denoted by $\mathcal{G}\circ \mathbb{V}_m[(x_1,x_2,\ldots, x_n)]$, we mean the quantity \begin{align}
\mathcal{G}\circ \mathbb{V}_m[(x_1,x_2,\ldots, x_n)]=\bigg|\bigg|\bigg(x_1-\frac{m}{x_1},x_2-\frac{m}{x_2},\ldots,x_n-\frac{m}{x_n}\bigg)\bigg|\bigg|\nonumber
\end{align}
\end{definition}

\begin{proposition}\label{cgidentity}
Let $(x_1,x_2,\ldots, x_n)\in \mathbb{R}^n$ for $n\geq 2$ with $x_j\neq 0$ for $j=1,\ldots,n$, then we have 
\begin{align}
\mathcal{G}\circ \mathbb{V}_m[(x_1,x_2,\ldots, x_n)]^2=\mathcal{M}\circ \mathbb{V}_1\bigg[\bigg(\frac{1}{x_1^2},\ldots,\frac{1}{x_n^2}\bigg)\bigg]+m^2\mathcal{M}\circ \mathbb{V}_1[(x_1^2,\ldots,x_n^2)]-2mn.\nonumber
\end{align}
In particular, if each $x_i>1$ for $1\leq i\leq n$, we have the estimate 
\begin{align}
\mathcal{G}\circ \mathbb{V}_m[(x_1,x_2,\ldots, x_n)]^2=\mathcal{M}\circ \mathbb{V}_1\bigg[\bigg(\frac{1}{x_1^2},\ldots,\frac{1}{x_n^2}\bigg)\bigg]-2mn+O\bigg(m^2\mathcal{M}\circ \mathbb{V}_1[(x_1^2,\ldots,x_n^2)]\bigg)\nonumber
\end{align}
with $m:=m(n)=o(1)$ as $n\longrightarrow \infty.$
\end{proposition}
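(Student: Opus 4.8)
The plan is to prove the identity by direct expansion, since the gap of compression is defined as a Euclidean norm and its square unpacks into exactly the three sums appearing on the right-hand side. First I would invoke Definition \ref{gap} to write
\begin{align}
\mathcal{G}\circ \mathbb{V}_m[(x_1,\ldots, x_n)]^2=\sum_{j=1}^{n}\bigg(x_j-\frac{m}{x_j}\bigg)^2,\nonumber
\end{align}
which is legitimate because the hypothesis $x_j\neq 0$ guarantees every reciprocal is defined. Expanding the binomial termwise gives $\bigl(x_j-\tfrac{m}{x_j}\bigr)^2=x_j^2-2m+\tfrac{m^2}{x_j^2}$, where the cross term is the constant $-2m$ independent of the index $j$.

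The second step is to sum over $j$ and separate the three pieces. The constant cross term contributes $\sum_{j=1}^{n}(-2m)=-2mn$, while the other two contributions are $\sum_{j=1}^{n}x_j^2$ and $m^2\sum_{j=1}^{n}\tfrac{1}{x_j^2}$. I would then recognize each of these as a mass of compression at scale $1$ by appealing to Definition \ref{mass}, which gives $\mathcal{M}\circ \mathbb{V}_1[(y_1,\ldots,y_n)]=\sum_{j=1}^{n}\tfrac{1}{y_j}$. Substituting $y_j=\tfrac{1}{x_j^2}$ yields $\mathcal{M}\circ \mathbb{V}_1\bigl[\bigl(\tfrac{1}{x_1^2},\ldots,\tfrac{1}{x_n^2}\bigr)\bigr]=\sum_{j=1}^{n}x_j^2$, and substituting $y_j=x_j^2$ yields $\mathcal{M}\circ \mathbb{V}_1[(x_1^2,\ldots,x_n^2)]=\sum_{j=1}^{n}\tfrac{1}{x_j^2}$. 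Collecting the three pieces reproduces the claimed identity verbatim.

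For the ``in particular'' estimate I would simply observe that the term $m^2\mathcal{M}\circ \mathbb{V}_1[(x_1^2,\ldots,x_n^2)]$ is nonnegative, so it may be absorbed into a big-$O$ term; writing it as $O\bigl(m^2\mathcal{M}\circ \mathbb{V}_1[(x_1^2,\ldots,x_n^2)]\bigr)$ and leaving the remaining two summands explicit gives the stated consequence immediately.

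Honestly, I expect no genuine obstacle here: the proposition is an algebraic identity obtained by expanding a square and rereading each resulting sum through the definitions of mass and compression. The only points requiring minor care are keeping the substitution $y_j=1/x_j^2$ versus $y_j=x_j^2$ straight when matching the two mass terms, and noting that the $x_j\neq 0$ hypothesis is exactly what makes both the gap and all the reciprocal sums well defined.
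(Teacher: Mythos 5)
Your proof is correct: expanding $\sum_{j=1}^{n}\bigl(x_j-\tfrac{m}{x_j}\bigr)^2$ termwise and reading $\sum_j x_j^2$ and $\sum_j 1/x_j^2$ back through Definition \ref{mass} as $\mathcal{M}\circ\mathbb{V}_1\bigl[\bigl(\tfrac{1}{x_1^2},\ldots,\tfrac{1}{x_n^2}\bigr)\bigr]$ and $\mathcal{M}\circ\mathbb{V}_1[(x_1^2,\ldots,x_n^2)]$ respectively gives the identity, and the ``in particular'' estimate is the trivial absorption you describe. The paper states Proposition \ref{cgidentity} without any proof, so there is nothing to compare against, but your direct computation is clearly the intended (and essentially the only) argument.
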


Proposition \ref{cgidentity} offers an extremely useful identity. It allows us to pass from the compression gap on points to the relative distance to the origin. It suggests that points under compression with a large gap must be far away from the origin than points with a relatively smaller gap under compression. That is to say, the inequality 
\begin{align}
\mathcal{G}\circ \mathbb{V}_m[\vec{x}]<\mathcal{G}\circ \mathbb{V}_m[\vec{y}]\nonumber
\end{align}
with $m:=m(n)=o(1)$ as $n\longrightarrow \infty$ if and only if $||\vec{x}||<||\vec{y}||$ for $\vec{x}, \vec{y}\in \mathbb{R}^n$ with each $x_i,y_i\geq 1$ for all $1\leq i\leq n$. This important transference principle will be put to use in obtaining our results.

\begin{corollary}\label{compression gap estimate 2}
Let $(x_1,x_2,\ldots, x_n)\in \mathbb{R}^n$ for $n\geq 2$ with $x_j\neq x_i$ for $j\neq i$ and $x_i,x_j\geq 1$ for each $1\leq i,j\leq n$. If $m:=m(n)=o(1)$ as $n\longrightarrow \infty$, then we have 
\begin{align}
\mathcal{G}\circ \mathbb{V}_m[(x_1,x_2,\ldots, x_n)]^2\geq n\mathrm{inf}(x_j^2)-2mn+O\bigg(m^2\mathcal{M}\circ \mathbb{V}_1[(x_1^2,\ldots,x_n^2)]\bigg)\nonumber
\end{align}
and 
\begin{align}
\mathcal{G}\circ \mathbb{V}_m[(x_1,x_2,\ldots, x_n)]^2\leq n\mathrm{sup}(x_j^2)-2mn+O\bigg(m^2\mathcal{M}\circ \mathbb{V}_1[(x_1^2,\ldots,x_n^2)]\bigg)\nonumber
\end{align}
\end{corollary}

\begin{lemma}[Compression gap estimates]\label{gap estimates}
Let $(x_1,x_2,\ldots, x_n)\in \mathbb{R}^n$ for $n\geq 2$ with $x_j\neq x_i$ for $j\neq i$ and $x_i,x_j\geq 1$ for each $1\leq i,j\leq n$. If $m:=m(n)=o(1)$ as $n\longrightarrow \infty$, then we have 
\begin{align}
\mathcal{G}\circ \mathbb{V}_m[(x_1,x_2,\ldots, x_n)]^2\ll n\mathrm{sup}(x_j^2)+m^2\log \bigg(1+\frac{n-1}{\mathrm{inf}(x_j)^2}\bigg)-2mn\nonumber
\end{align}
and 
\begin{align}
\mathcal{G}\circ \mathbb{V}_m[(x_1,x_2,\ldots, x_n)]^2\gg n\mathrm{inf}(x_j^2)+m^2\log \bigg(1-\frac{n-1}{\mathrm{sup}(x_j^2)}\bigg)^{-1}-2mn.\nonumber
\end{align}
\end{lemma}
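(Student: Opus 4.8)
The plan is to read off both bounds directly from the exact identity furnished by Proposition \ref{cgidentity}, feeding in the two elementary and two analytic estimates for the sums that appear there. Recalling the definition of the mass (Definition \ref{mass}), the two mass terms in Proposition \ref{cgidentity} evaluate to
\begin{align}
\mathcal{M}\circ \mathbb{V}_1\bigg[\bigg(\frac{1}{x_1^2},\ldots,\frac{1}{x_n^2}\bigg)\bigg]=\sum_{j=1}^{n}x_j^2,\qquad \mathcal{M}\circ \mathbb{V}_1[(x_1^2,\ldots,x_n^2)]=\sum_{j=1}^{n}\frac{1}{x_j^2},\nonumber
\end{align}
so that the identity reads
\begin{align}
\mathcal{G}\circ \mathbb{V}_m[(x_1,\ldots,x_n)]^2=\sum_{j=1}^{n}x_j^2+m^2\sum_{j=1}^{n}\frac{1}{x_j^2}-2mn.\nonumber
\end{align}
Since the term $-2mn$ is exact, it will suffice to bound the two nonnegative sums, from above for the upper estimate and from below for the lower one.

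First I would dispose of $\sum_{j=1}^{n}x_j^2$ by the trivial observation that each of its $n$ summands lies between $\mathrm{Inf}(x_j^2)$ and $\mathrm{sup}(x_j^2)$, which gives at once
\begin{align}
n\,\mathrm{Inf}(x_j^2)\leq \sum_{j=1}^{n}x_j^2\leq n\,\mathrm{sup}(x_j^2).\nonumber
\end{align}
The substantive input is the remaining sum $\sum_{j=1}^{n}\frac{1}{x_j^2}$, which I would control by applying Proposition \ref{crucial} to the tuple $(x_1^2,x_2^2,\ldots,x_n^2)$ with scale $1$. The hypotheses transfer cleanly: since $(x_1,\ldots,x_n)\in\mathbb{N}^n$ with the $x_j$ pairwise distinct, the squares $x_j^2$ are again pairwise distinct natural numbers, and for positive integers one has $\mathrm{Inf}(x_j^2)=\mathrm{Inf}(x_j)^2$ and $\mathrm{sup}(x_j^2)=\mathrm{sup}(x_j)^2$. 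Proposition \ref{crucial} then yields
\begin{align}
\log\bigg(1-\frac{n-1}{\mathrm{sup}(x_j^2)}\bigg)^{-1}\ll \sum_{j=1}^{n}\frac{1}{x_j^2}\ll \log\bigg(1+\frac{n-1}{\mathrm{Inf}(x_j)^2}\bigg).\nonumber
\end{align}

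To finish, I would multiply this through by $m^2$ and add it to the elementary bounds on $\sum x_j^2$, carrying the exact term $-2mn$ unchanged; substituting into the identity above then produces the two asserted estimates. The only point demanding care is the bookkeeping with the signed term: one must ensure that in the upper estimate every positive contribution is replaced by an upper bound and in the lower estimate by a lower bound, with $-2mn$ left untouched in both directions. Because the two mass sums are nonnegative and each is estimated in the correct sense with an absolute implied constant, the additive combination is legitimate and the negative term causes no trouble. I expect this verification of inequality directions, rather than any genuine analytic difficulty, to be the main thing to watch.
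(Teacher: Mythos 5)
Your proposal is correct and follows essentially the same route as the paper: it starts from the identity in Proposition \ref{cgidentity}, bounds $\mathcal{M}\circ \mathbb{V}_1[(1/x_1^2,\ldots,1/x_n^2)]=\sum_j x_j^2$ trivially between $n\,\mathrm{Inf}(x_j^2)$ and $n\,\mathrm{sup}(x_j^2)$, and controls $\sum_j 1/x_j^2$ by applying Proposition \ref{crucial} to the tuple of squares. Your version is merely more explicit than the paper's one-line argument, in particular in checking that the hypotheses of Proposition \ref{crucial} transfer to $(x_1^2,\ldots,x_n^2)$ and in flagging the sign bookkeeping around the exact term $-2mn$.
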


\begin{proof}
The estimates are deduced using the estimates in Proposition \ref{crucial} and with the observation that 
\begin{align}
n\mathrm{inf}(x_j^2)\ll\mathcal{M}\circ \mathbb{V}_1\bigg[\bigg(\frac{1}{x_1^2},\ldots,\frac{1}{x_n^2}\bigg)\bigg]\ll n\mathrm{sup}(x_j^2).\nonumber
\end{align}
\end{proof}
\bigskip

\begin{remark}
It is important to note that the inequality in Corollary \ref{compression gap estimate 2} implies the inequalities in Lemma \ref{gap estimates}. At any given moment, we will decide which of the versions of these inequalities to use. Indeed, the inequalities in Corollary \ref{compression gap estimate 2} are more applicable to various problems than those of Lemma \ref{gap estimates}.
\end{remark}

In this paper, when we say that points are concentrated around the origin; In particular, if a set of $n$ points in $\mathbb{R}^k$ is concentrated around the origin, we mean 
$$
\mathrm{inf}(x_{j_i})_{i=1}^{k}=\mathrm{sup}(x_{j_i})_{i=1}^{k}=n^{\frac{1}{(\log n)^{1+\epsilon}}}
$$ 
for any $\epsilon>0$ and for $1\leq j\leq n$.

\section{Application to the Erd\H{o}s unit distance and distinct distance conjecture}\label{sec:applications}

In this section, we apply the compression gap estimates to study the problem of determining the number of unit distances that can be formed from $n$ points. We state our main theorem that takes into consideration the dimension of the space in which the points reside.

\begin{theorem}\label{weakerdos}
Let 
$$
\mathcal{I}_n=\bigg\{(\vec{x}_t,\vec{x_j})\in \mathbb{E}\subset\mathbb{R}^k~:~||\vec{x_j}-\vec{x_t}||=1,~1\leq t,j\leq n\bigg\}.
$$ 
We have 
\begin{align} 
\#\mathcal{I}_n \geq C\frac{\sqrt{k}}{2}n^{1+o(1)}\nonumber
\end{align}
for some $C>0$.
\end{theorem}

\begin{proof}
First, we set $m:=m(k)=o(1)$ as $k\longrightarrow \infty$ and carefully choose $n$ points $\vec{x}_j$ for $1\leq j\leq n$ in $\mathbb{R}^k$ such that $\lfloor \frac{n}{2}\rfloor$ of these points also have their image points under compression. In particular, we choose $n$ points such that $\lfloor\frac{n}{2}\rfloor$ of those points $\vec{x}_j$ satisfies 
$$
\mathrm{inf}(x_{j_i})_{i=1}^{k}=\mathrm{sup}(x_{j_i})_{i=1}^{k}=n^{\frac{1}{(\log n)^{1+\epsilon}}}
$$ 
for any $\epsilon>0$ and for each of these points we also include their image points under compression $\mathbb{V}_m[\vec{x}_j]$. This ensures that $||\vec{x}_j-\mathbb{V}_m[\vec{x}_j]||=1$ for all sufficiently large $n$. Consequently, we have
\begin{align}
\#\mathcal{I}&=\# \bigg\{||\vec{x_j}-\vec{x_t}||:~||\vec{x_j}-\vec{x_t}||=1,~1\leq t,j \leq n,~\vec{x_j},\vec{x}_t\in \mathbb{R}^k\bigg\}\nonumber \\& \geq \# \bigg\{||\vec{x_j}-\vec{x_t}||:~||\vec{x_j}-\vec{x_t}||=1,~1\leq t,j \leq n,~\vec{x_j},\vec{x}_t\in\mathbb{R}^k,\nonumber~\\&\mathrm{min}\{\mathrm{inf}(x_{j_s})\}_{\substack{1\leq j\leq \frac{n}{2}\\1\leq s \leq k}}=n^{\frac{1}{(\log n)^{1+\epsilon}}}\bigg\} \nonumber \\&\geq \#\bigg\{||\vec{x_j}-\vec{x_t}||:\vec{x_j}\in \mathbb{R}^k,~||\vec{x_j}-\vec{x_t}||=1,~1\leq j\leq \frac{n}{2},~ \mathbb{V}_1[\vec{x_j}]=\vec{x_t}\nonumber,~\\&\mathrm{min}\{\mathrm{inf}(x_{j_s})\}_{\substack{1\leq j\leq \frac{n}{2}\\1\leq s\leq k}}=n^{\frac{1}{(\log n)^{1+\epsilon}}}\bigg\}.\nonumber 
\end{align}
It is important to note that the above quantity cannot be zero since the condition 
\begin{align}
    \mathrm{min}\{\mathrm{inf}(x_{j_s})\}_{\substack{1\leq j\leq \frac{n}{2}\\1\leq s \leq k}}=n^{\frac{1}{(\log n)^{1+\epsilon}}}\nonumber
\end{align}
is required for $||\vec{x_j}-\vec{x_t}||=1$ with $ \mathbb{V}_1[\vec{x_j}]=\vec{x_t}$ and the configuration exists by construction. This follows from the construction requirement that the chosen set of points in $\mathbb{R}^k$ has $\lfloor \frac{n}{2}\rfloor$ of the points concentrated around the origin, which means 
$$
\mathrm{inf}(x_{j_i})_{i=1}^{k}=\mathrm{sup}(x_{j_i})_{i=1}^{k}=n^{\frac{1}{(\log n)^{1+\epsilon}}}
$$ 
for any $\epsilon>0$ and for $1\leq j\leq \frac{n}{2}$. The right side is basically the sum 
\begin{align}
\sum \limits_{\substack{\mathcal{G}\circ \mathbb{V}_m[\vec{x_j}]=1\\1\leq j\leq \frac{n}{2}\\\mathrm{min}\{\mathrm{inf}(x_{j_s})\}_{\substack{1\leq j\leq \frac{n}{2}\\1\leq s\leq k}}=n^{\frac{1}{(\log n)^{1+\epsilon}}}}}1&=\sum \limits_{\substack{1\leq j\leq \frac{n}{2}\\\mathrm{min}\{\mathrm{inf}(x_{j_s})\}_{\substack{1\leq j\leq \frac{n}{2}\\1\leq s \leq k}}=n^{\frac{1}{(\log n)^{1+\epsilon}}}}}\mathcal{G}\circ \mathbb{V}_m[\vec{x_j}]\nonumber 
\end{align}
Taking $m:=m(k)=o(1)$ as $k\longrightarrow \infty$, in particular, if we choose $m=O(\frac{1}{\log k})$, then we have the lower bound for the right hand side 
\begin{align}
\sum \limits_{\substack{1\leq j\leq \frac{n}{2}\\\mathrm{min}\{\mathrm{inf}(x_{j_s})\}_{\substack{1\leq j\leq \frac{n}{2}\\1\leq s \leq k}}=n^{\frac{1}{(\log n)^{1+\epsilon}}}}}\mathcal{G}\circ \mathbb{V}_m[\vec{x_j}]&\geq \sum \limits_{\substack{1\leq j\leq \frac{n}{2}\\\mathrm{min}\{\mathrm{inf}(x_{j_s})\}_{\substack{1\leq j\leq \frac{n}{2}\\1\leq s \leq k}}=n^{\frac{1}{(\log n)^{1+\epsilon}}}}}C\mathrm{inf}(x_{j_s})_{1\leq s \leq k}\sqrt{k}\nonumber \\&\geq  C\frac{n\sqrt{k}}{2}\mathrm{min}\{\mathrm{inf}(x_{j_s})\}_{\substack{1\leq j\leq \frac{n}{2}\\1\leq s\leq k}}\nonumber \\&=C\frac{\sqrt{k}}{2}n^{1+\frac{1}{(\log n)^{1+\epsilon}}}\nonumber
\end{align}
for some $C>0$, by application of the lemma \ref{gap estimates}. This establishes the claimed lower bound for the construction.
\end{proof}
\bigskip

It is important to note that the lower estimate for the construction provided in Theorem \ref{weakerdos} was achieved by counting not all possible unit distances, but only the unit distance that corresponds to compression gaps of unit length. We state the second theorem as an application, which gives a lower bound for the number of distinct distances that can be formed from $n$ points in Euclidean space $\mathbb{R}^k$ for all $k\geq 2$. 

\begin{theorem}\label{erdosproblem}
We have
\begin{align}
\# \{d_j:d_j=||\vec{x_s}-\vec{y_t}||,~d_j\neq d_i,~1\leq s,t\leq n,~\vec{x},\vec{y}\in \mathbb{R}^k\}\geq D\frac{\sqrt{k}}{2}n^{\frac{2}{k}-o(1)}\nonumber 
\end{align}
for some $D>0$.
\end{theorem}

\begin{proof}
First, we set $m:=m(k)=o(1)$ as $k\longrightarrow \infty$ and carefully choose $n$ points $\vec{x}_j$ for $1\leq j\leq n$ in $\mathbb{R}^k$ such that $\lfloor \frac{n}{2}\rfloor$ of these points also have their image points under compression. That is, for each $\vec{x}_j$, we also include $\mathbb{V}_m[\vec{x}_j]$. Next, for $\lfloor \frac{n}{2}\rfloor$ of those points, we make the assignment $\mathrm{sup}(x_{j_i})=n^{1-\frac{2}{k}+\epsilon}$ for any small $\epsilon:=\epsilon(i)>0$ and $\mathrm{inf}(x_{j_i})\geq 1$. This ensures that 
\begin{align}
    \mathrm{max}_{1\leq j\leq n}\mathcal{G}\circ \mathbb{V}_m[\vec{x_j}]=n^{1-\frac{2}{k}+\epsilon} \nonumber
\end{align}
for any small $\epsilon:=\epsilon(i)>0$. Now, we let 
$$
\{d_j:d_j=||\vec{x_s}-\vec{y_t}||,~d_j\neq d_i,~1\leq s,t\leq n,~\vec{x},\vec{y}\in \mathbb{R}^k\}=\mathcal{R}
$$ 
then 
\begin{align}
\#\mathcal{R}&\geq \# \bigg\{d_j:d_j=||\vec{x_s}-\vec{y_t}||,~d_j\neq d_i,i\neq j, ~1\leq s,t\leq n,~\vec{x},\vec{y}\in \mathbb{R}^k,~\mathrm{sup}(d_j)=n^{1-\frac{2}{k}+o(1)}\bigg\}\nonumber \\&\geq \# \bigg \{d_j:d_j=\mathcal{G}\circ \mathbb{V}_m[\vec{x_j}],~d_j\neq d_i,~1\leq j\leq \frac{n}{2},~\mathrm{sup}(d_j)=n^{1-\frac{2}{k}+o(1)},\nonumber \\& \vec{x_j}\in \mathbb{R}^k,x_{j_s}\geq 1,~(1\leq s\leq k),~\mathbb{V}[\vec{x_j}]=\vec{x_t}\bigg\}\nonumber \\&=\sum \limits_{\substack{d_j=\mathcal{G}\circ \mathbb{V}_m[\vec{x_j}]\\1\leq j\leq \frac{n}{2}\\ \mathrm{sup}(d_j)=n^{1-\frac{2}{k}+o(1)}\\\vec{x_j}\in \mathbb{R}^k\\d_i\neq d_j\\i\neq j}}1\nonumber \\&=\sum \limits_{\substack{1\leq j\leq \frac{n}{2}\\ \mathrm{sup}(d_j)=n^{1-\frac{2}{k}+o(1)}\\\vec{x_j}\in \mathbb{R}^k\\d_i\neq d_j\\i\neq j}}\frac{\mathcal{G}\circ \mathbb{V}_m[\vec{x_j}]}{d_j}\nonumber \\&\geq D\sqrt{k}\sum \limits_{\substack{1\leq j\leq \frac{n}{2}\\ \mathrm{sup}(d_j)=n^{1-\frac{2}{k}+o(1)}\\\vec{x_j}\in \mathbb{R}^k\\d_i\neq d_j\\i\neq j}}\frac{\mathrm{inf}(x_{j_s})_{1\leq s \leq k}}{d_j}\nonumber \\&\geq D\sqrt{k}\sum \limits_{\substack{1\leq j\leq \frac{n}{2}\\ \mathrm{sup}(d_j)=n^{1-\frac{2}{k}+o(1)}\\d_i\neq d_j\\i\neq j}}\frac{1}{d_j}\nonumber \\&\geq D\sqrt{k}\sum \limits_{\mathrm{sup}(d_j)=n^{1-\frac{2}{k}+o(1)}}\frac{\frac{n}{2}}{\mathrm{sup}(d_j)}_{1\leq j \leq \frac{n}{2}}\nonumber \\&\geq D\frac{\sqrt{k}}{2}n^{\frac{2}{k}-o(1)}\nonumber
\end{align} 
where we have used the lemma \ref{gap estimates}, and the claimed lower bound follows for this construction.
\end{proof}

\section{Conclusion}\label{sec:conclusion}

It needs to be said that the result in Theorem \ref{erdosproblem} can be viewed as providing an alternate solution to the Erd\H{o}s distinct distance problem which takes into consideration the dimension of the space in which the points reside. The lower bound of this type exists in the literature (see, e.g, \cite{guth2015erdHos}). However, the method used is completely different from the one we have used here. Theorem \ref{weakerdos} and Theorem \ref{erdosproblem} can be considered as a generalization of the solution to both versions of the Erd\H{o}s distance problem to any euclidean space of dimension $k\geq 2$. In particular, we have the following theorems as consequences of the main results of this paper.

\begin{theorem}
The number of distinct distances that can be formed from $n$ points in any euclidean space $\mathbb{R}^{2n}$ for $n\geq 2$ is at least \begin{align}
\geq D\frac{\sqrt{2}}{2}n^{\frac{1}{n}+\frac{1}{2}-o(1)}\nonumber
\end{align}
for some $D>0$.
\end{theorem} 
\bigskip

\begin{theorem}
The number of distinct distances that can be formed from $n$ points in a euclidean space of dimension $n^2$ for $n\geq 2$ is at least \begin{align}\geq D\frac{n^{\frac{2}{n^2}+1-o(1)}}{2}\nonumber
\end{align}for some $D>0$.
\end{theorem}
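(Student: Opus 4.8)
The plan is to obtain this statement as a direct specialization of Theorem \ref{erdosproblem}, taking the ambient dimension to equal the square of the number of points. Theorem \ref{erdosproblem} asserts that for $n$ points in $\mathbb{R}^k$ the number of distinct distances is $\gg_k \frac{\sqrt{k}}{2} n^{\frac{2}{k}-o(1)}$, with the dimension $k$ appearing as a free parameter. I would therefore set $k = n^2$ and simply trace through what the right-hand side becomes under this choice, so that no genuinely new estimate is required beyond invoking the main theorem.

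First I would substitute $k = n^2$ into the factor $\frac{\sqrt{k}}{2}$, which collapses to $\frac{\sqrt{n^2}}{2} = \frac{n}{2}$. Next I would substitute the same value into the exponent $\frac{2}{k}$, giving $\frac{2}{n^2}$, so that the power of $n$ becomes $n^{\frac{2}{n^2}-o(1)}$. Multiplying the two pieces yields
\begin{align}
\frac{n}{2}\, n^{\frac{2}{n^2}-o(1)} = \frac{1}{2}\, n^{1+\frac{2}{n^2}-o(1)} = \frac{n^{\frac{2}{n^2}+1-o(1)}}{2},\nonumber
\end{align}
which is exactly the claimed lower bound. The arithmetic is the entire content of the derivation once Theorem \ref{erdosproblem} is granted.

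The one point deserving care, and the step I expect to be the main obstacle, is the implicit constant hidden in the notation $\gg_k$. In Theorem \ref{erdosproblem} this constant is permitted to depend on the dimension $k$; under the substitution $k = n^2$ that dependence becomes a dependence on $n$, the very quantity in which we are measuring growth. I would therefore want to verify that the constant arising from Lemma \ref{gap estimates}, which is what ultimately controls $\gg_k$, is in fact independent of $k$ once the factor $\frac{\sqrt{k}}{2}$ has been displayed explicitly; otherwise the conclusion would have to be stated with $\gg_n$ rather than a bare $\gg$. Granting that the displayed $\frac{\sqrt{k}}{2}$ already absorbs all of the dimensional dependence, the substitution goes through verbatim and the result follows.
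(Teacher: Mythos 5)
Your derivation is exactly what the paper intends: these final theorems are stated without proof as direct consequences of Theorem \ref{erdosproblem}, and specializing $k=n^2$ so that $\frac{\sqrt{k}}{2}n^{\frac{2}{k}-o(1)}$ becomes $\frac{n^{\frac{2}{n^2}+1-o(1)}}{2}$ is the whole argument. Your caveat about the implicit constant in $\gg_k$ becoming $n$-dependent under this substitution is well taken and is not addressed in the paper, but it does not change the route you take, which coincides with the paper's.
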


\footnote{
\par
.}%

\bibliographystyle{amsplain}

\end{document}